\documentclass[11pt]{nyjm}
     
\usepackage{amsmath}
\usepackage{amsthm}
\usepackage{amsopn}
\usepackage{amssymb}
\usepackage[cmtip, all]{xy}
\usepackage{stmaryrd}

\theoremstyle{plain}
\newtheorem{Lem}{Lemma}[section]
\newtheorem{Prop}[Lem]{Proposition}

\newtheorem{Thm}[Lem]{Theorem}

{\theoremstyle{definition} 

\newtheorem{Rk}[Lem]{Remark}
\newtheorem{Def}[Lem]{Definition}} 

\title{Homotopy fixed points for 
profinite groups emulate homotopy 
fixed points for discrete groups}

\author{Daniel G. Davis}
\address{Department of Mathematics, University of Louisiana at Lafayette, Lafayette, LA 70504, U.S.A.} 

\email{dgdavis@louisiana.edu, www.ucs.louisiana.edu/$\sim$dxd0799/ }    

\keywords{Homotopy fixed point spectrum, discrete $G$--spectrum}

\subjclass[2010]{55P42, 55P91}

\DeclareMathOperator*{\holim}{holim}
\DeclareMathOperator*{\holimG}{holim^{\mathit{G}}}
\DeclareMathOperator*{\holimH}{holim^{\mathit{H}}}
\DeclareMathOperator*{\hocolim}{hocolim}
\DeclareMathOperator*{\colim}{colim}

\newcommand{\zig}{\addtocounter{Lem}{1}\tag{\theLem}} 
 
\pagestyle{plain}

\renewcommand{\:}{\colon}

\newcommand{\emhyph}{--\penalty0\hskip0pt\relax}

\begin{document}

\begin{abstract}
If $K$ is a discrete group and $Z$ is a $K$--spectrum, then 
the homotopy fixed point spectrum 
$Z^{hK}$ is $\mathrm{Map}_\ast(EK_+, Z)^K,$ the 
fixed points of a familiar expression. Similarly, if $G$ is a 
profinite group and $X$ is a discrete $G$--spectrum,
then $X^{hG}$ is often given by 
$(\mathcal{H}_{\scriptscriptstyle{G,X}})^{G}$, where 
$\mathcal{H}_{\scriptscriptstyle{G,X}}$ is a 
certain explicit construction given by a 
homotopy limit in the category of discrete $G$--spectra.
Thus, in each of two common equivariant settings, 
the homotopy fixed point 
spectrum is equal to the fixed points of an explicit 
object in the ambient equivariant category.
We enrich this pattern by proving in a precise sense 
that the discrete $G$--spectrum 
$\mathcal{H}_{\scriptscriptstyle{G,X}}$ is 
just ``a profinite version" of $\mathrm{Map}_\ast(EK_+, Z)$: at each 
stage of its construction, $\mathcal{H}_{\scriptscriptstyle{G,X}}$ 
replicates in the setting of discrete $G$-spectra 
the corresponding stage in the formation of $\mathrm{Map}_\ast(EK_+, Z)$ 
(up to a certain natural identification).
\end{abstract}
\maketitle
\tableofcontents
\section{Introduction}
\subsection{Recalling a familiar scenario: homotopy fixed points for discrete groups}
\label{subsection}
\par
Let $K$ be a discrete group and let $Z$ be a (naive) $K${\emhyph}spectrum, where, 
here and everywhere else in this paper (unless explicitly stated otherwise), 
``spectrum" means 
Bousfield-Friedlander spectrum of simplicial sets. Let $EK$ be the 
usual simplicial set with $n$--simplices equal to the cartesian product 
$K^{n+1}$, for each $n \geq 0$; let $EK_+$ denote $EK$ with a 
disjoint basepoint added; and let \[(-)_{f} \: \mathrm{Spt} 
\to \mathrm{Spt}, \ \ \ Y \mapsto Y_f\] be a fibrant replacement functor for the 
model category of spectra (with the usual stable structure). Also, 
given a pointed simplicial set $L$ and any spectrum $Y$, let 
$\mathrm{Map}_\ast(L, Y)$ be the mapping spectrum with $m$th 
pointed simplicial set $\mathrm{Map}_\ast(L,Y)_m$ having 
$n$--simplices equal to 
\[\mathrm{Map}_{\mathcal{S}_\ast}(L, Y_m)_n = \mathcal{S}_\ast(L 
\wedge \Delta[n]_+, Y_m),\] where $\mathcal{S}_\ast$ is the category 
of pointed simplicial sets. Then 
the homotopy fixed point spectrum $Z^{hK}$ 
is given explicitly by 
\[Z^{hK} = \mathrm{Map}_\ast(EK_+, Z_{f})^K.\] One reason for the importance 
of the explicit construction $\mathrm{Map}_\ast(EK_+, Z_{f})^K$ is that it 
makes it possible to build the descent spectral sequence 
\[E_2^{s,t} = H^s(K; \pi_t(Z)) \Longrightarrow \pi_{t-s}(Z^{hK}).\]
\subsection{Considering homotopy fixed points for profinite groups: a pattern 
emerges}\label{subsectiontwo}
Now let $G$ be a profinite group, let $\mathrm{Spt}_G$ be the 
simplicial model category of discrete $G$--spectra (for details, we refer 
the reader to \cite[Section 3]{cts} 
and \cite[Remark 3.11]{GJlocal}), and 
let $X \in \mathrm{Spt}_G$. We consider how to 
carry out the above constructions for $K$ and $Z$ in this 
profinite setting. 
\begin{Rk}
In the titles of this paper and \S \ref{subsectiontwo}, the phrase 
``homotopy fixed points for profinite groups" is meant for the 
setting of discrete $G$-spectra. We point out that 
there is a theory of homotopy 
fixed points for profinite $G$-spectra (see \cite{hfplt}) and our phrasing is 
not meant to be exclusionary.
\end{Rk}
\par
As explained in 
\cite[Definition 7.1]{cts}, the functor 
\[\mathrm{Map}_c(G,-) \: \mathrm{Spt}_G \to \mathrm{Spt}_G, \ \ \ 
X \mapsto \mathrm{Map}_c(G,X),\] where each pointed simplicial discrete 
$G$--set $\mathrm{Map}_c(G, X)_m$ satisfies 
\[(\mathrm{Map}_c(G, X)_m)_n = \mathrm{Map}_c(G, (X_m)_n)\] 
(the set of continuous functions $G \to (X_m)_n$), 
forms a triple, and hence, there is a cosimplicial 
discrete $G$--spectrum $\mathrm{Map}_c(G^\bullet,X)$, whose 
$l${\emhyph}cosimplices are obtained by applying $\mathrm{Map}_c(G,-)$ 
iteratively to $X$,  $l+1$ times. Thus, there is an isomorphism
\[\mathrm{Map}_c(G^\bullet, X)^l \cong \mathrm{Map}_c(G^{l+1}, X)\] 
of discrete $G$--spectra. Also, by \cite[Lemma 2.1]{agt}, the map 
\[X \xrightarrow{\,\cong\,} \colim_{N \vartriangleleft_o G} X^N \to 
\colim_{N \vartriangleleft_o G} \,(X^N)_f =: \widehat{X}\] is a weak equivalence 
in $\mathrm{Spt}_G$, with target $\widehat{X}$ fibrant in $\mathrm{Spt}$. 
\par
We let $X^{hG}$ denote the output of the total right derived functor of fixed points 
$(-)^G \: \mathrm{Spt}_G \to \mathrm{Spt}$, when applied to $X$: the spectrum 
$X^{hG}$ is more succinctly known as the homotopy fixed point spectrum of $X$ with 
respect to the continuous action of $G$. Also, 
let $\holimG$ denote the homotopy limit for $\mathrm{Spt}_G$, 
as defined in \cite[Definition 18.1.8]{hirschhorn}, and let 
$H^s_c(G; M)$ be equal to the continuous cohomology of $G$ with 
coefficients in the discrete $G$--module $M$. Then 
by \cite[Theorem 7.2]{agt} and 
\cite[Theorem 2.3, proof of Theorem 5.2]{fibrantmodel}, there is a 
weak equivalence 
\[X^{hG} \overset{\simeq}{\longrightarrow} \bigl(\holimG_\Delta 
\mathrm{Map}_c(G^\bullet, \widehat{X})\bigr)^{\negthinspace G},\] 
whenever 
any one of the following conditions holds:
\begin{enumerate}
\item[(i)]
$G$ has finite virtual cohomological dimension 
(that is, $G$ contains an open subgroup $U$ such 
that $H^s_c(U; M) = 0$, for all $s>u$ and all discrete $U$--modules 
$M$, for some integer $u$);
\item[(ii)]
there exists a fixed integer $p$ such that $H^s_c(N; \pi_t(X)) = 0$, for all 
$s>p$, all $t \in \mathbb{Z}$, and all 
$N \vartriangleleft_o G$; or
\item[(iii)]
there exists a fixed integer $r$ such that $\pi_t(X) = 0$, for all $t > r.$
\end{enumerate} 

As in the case of $Z^{hK}$, one of the main reasons why the explicit 
construction \[\bigl({\holim_\Delta}^G 
\,\mathrm{Map}_c(G^\bullet, \widehat{X})\bigr)^{\negthinspace G} 
= \Bigl(\mspace{1mu}\colim_{N \vartriangleleft_o G} \,\bigl(\holim_\Delta 
\mathrm{Map}_c(G^\bullet, 
\widehat{X})\bigr)^{\negthinspace N}\Bigr)^{\negthinspace 
\negthinspace \mspace{1mu} G}\] 
(see \cite[Theorem 2.3]{fibrantmodel}; the ``holim" denotes the homotopy 
limit for spectra) 
is important is that when $X$ satisfies one of the above conditions, 
the construction makes it possible to build the descent 
spectral sequence
\begin{equation}\zig\label{dss}
E_2^{s,t} = H^s_c(G; \pi_t(X)) \Longrightarrow \pi_{t-s}(X^{hG})
\end{equation} (as in \cite[Theorem 7.9]{cts}, by using (\ref{usedabove}) 
below: given the context, 
this reference is the most immediate source for the derivation of (\ref{dss}), but  
the account in \cite[Theorem 7.9]{cts} is just a particular case of the much earlier 
\cite[Proposition 1.36]{thomason}, and, in the literature for 
``simplicial-set-based discrete $G$--objects," the references 
\cite[Corollary 3.6]{jardinejpaa}, 
\cite[Section 5]{hGal}, \cite[(6.7)]{Jardine}, and \cite[Section 2.14]{stavros} 
are earlier than \cite[Theorem 7.9]{cts} and contain all of its key 
ingredients).

Given the above discussion, it is natural to make the following definition.

\begin{Def}
If the discrete $G$--spectrum $X$ satisfies any one of the conditions 
(i), (ii), and (iii) above, then we say that $X$ is a {\em concrete} discrete 
$G$--spectrum, since $X$ has a concrete model for its homotopy fixed 
point spectrum.
\end{Def}

\par
In practice, at least one of the above three conditions is usually satisfied. For example, as 
is common in 
chromatic homotopy theory, let $\Gamma_h$ be equal to 
any formal group law of 
height $h$, with $h$ positive, over $k$, a finite field of prime characteristic $p$ 
that contains the field $\mathbb{F}_{p^h}$, and consider any closed subgroup $H$ of the 
compact $p$-adic analytic group 
$G(k, \Gamma_h)$, the extended Morava stabilizer group (see 
\cite[Section 7]{Pgg/Hop0}). Then $H$ is a profinite group with 
finite virtual cohomological dimension (see 
\cite[Section 2.2.0]{Morava}), and thus, the discrete $H$--spectrum 
\begin{equation}\zig\label{neatdiscreteguy}
\Bigl(\mspace{1mu}\colim_{N \vartriangleleft_o G(k, \Gamma_h)} E(k, \Gamma_h)^{hN}\Bigr) 
\wedge F \simeq 
E(k, \Gamma_h) \wedge F\end{equation} satisfies condition $(i)$ above. In 
(\ref{neatdiscreteguy}), $E(k, \Gamma_h)$ is the Morava $E$--theory 
associated to the pair $(k, \Gamma_h)$ (again, see \cite[Section 7]{Pgg/Hop0}); 
the construction of the homotopy fixed point spectrum $E(k, \Gamma_h)^{hN}$ 
is described in \cite[page 2895]{iterated} (with substantial input from 
\cite{DH} and \cite[Theorem 8.2.1]{joint}); $F$ is any finite spectrum 
of type $h$; and the weak equivalence 
is due to \cite{DH} (the details are written out in \cite[Theorem 6.3, Corollary 6.5]{cts}). Discrete $H$--spectra that have the form 
given by (\ref{neatdiscreteguy}) are the building blocks for many of the 
continuous $H$--spectra that are studied in chromatic theory; for examples, 
see \cite[Section 9]{brsbrauer}, \cite[Section 2.3]{modular}, and 
\cite[pages 153--155]{lawsontopology}. 
 
If $Y$ is any spectrum and $X$ is the discrete $G$--spectrum 
$\mathrm{Map}_c(G, Y)$, then by \cite[Lemma 9.4.5]{Wilson}, condition (ii) above 
is satisfied, with $p = 0$. Such concrete 
discrete $G$--spectra arise in the theory \cite{Rognes} 
of Galois extensions for commutative rings in stable homotopy theory: 
for example, if $T$ is a spectrum such that the Bousfield 
localization $L_T(-)$ is smashing, $M$ is any finite spectrum, 
$k$ is a spectrum such that $L_k(-) \simeq L_ML_T(-)$, and (for the 
remainder of this sentence, using symmetric spectra as needed) $E$ is a 
$k$--local profinite $G$--Galois extension of a $k$--local commutative symmetric 
ring spectrum $A$, then 
\[L_k(E \wedge_A E) \simeq L_k\bigl(\mathrm{Map}_c(G, E)\bigr),\] 
by \cite[Proposition 6.2.4]{joint}.

We see that under hypotheses that are often satisfied, 
the homotopy fixed point spectrum $X^{hG}$ can be obtained by 
taking the $G$--fixed points of the discrete $G$--spectrum 
\[{\holim_\Delta}^G 
\,\mathrm{Map}_c(G^\bullet, \widehat{X}) = 
\colim_{N \vartriangleleft_o G} \,\bigl(\holim_\Delta 
\mathrm{Map}_c(G^\bullet, 
\widehat{X})\bigr)^{\negthinspace N}\] (this is the 
discrete $G$--spectrum $\mathcal{H}_{\scriptscriptstyle{G,X}}$ that is 
referred to in the 
abstract for this paper), and hence, the construction 
of $X^{hG}$ follows a pattern that was seen before in the case of 
$Z^{hK}$: form the homotopy fixed point spectrum by taking 
the fixed points of an explicitly constructed 
spectrum that is an object in the equivariant category of spectra 
that is under consideration ($\mathrm{Spt}_G$ or $K$--spectra, 
respectively).

But there is more to 
the above pattern than just the last observation: 
this is hinted at by the tandem facts that, as in \cite[proof of Theorem 5.2]{fibrantmodel}, 
there is an isomorphism
\begin{equation}\label{usedabove}\zig
\bigl(\displaystyle{\holim_\Delta}^G \,\mathrm{Map}_c(G^\bullet, \widehat{X})\bigr)^{\negthinspace G} 
\cong \bigl(\displaystyle{\holim_\Delta \mathrm{Map}_c(G^\bullet, \widehat{X})}\bigr)^{\negthinspace G}\end{equation}
and the $G$--spectrum 
$\holim_{\Delta} \mathrm{Map}_c(G^\bullet, \widehat{X})$  
on the right-hand side is often 
viewed as being ``a profinite version" of 
the construction $\mathrm{Map}_\ast(EK_+, Z_f)$ (for example, see \cite{mitchell}). (Also, it is worth pointing out that if $X$ is a 
concrete discrete $G$--spectrum, then $X^{hG}$ has almost always been presented in the literature as being the $G$--fixed points of 
$\holim_\Delta \mathrm{Map}_c(G^\bullet, \widehat{X})$ (this 
$G$--spectrum  
is not, in general, 
a discrete $G$--spectrum: see the remark below for an example of 
when this happens), instead of as the 
$G$--fixed points of 
$\textstyle{\holim^G_\Delta \mathrm{Map}_c(G^\bullet, \widehat{X})}$.) 
However, what the 
last assertion means has never been explained in a precise and systematic way, and 
further, as the above considerations make clear, it is rather 
$\textstyle{\holim^G_\Delta \mathrm{Map}_c(G^\bullet, \widehat{X})}$, 
instead of $\holim_\Delta \mathrm{Map}_c(G^\bullet, \widehat{X})$, 
that we want 
to understand as a ``profinite version" 
of $\mathrm{Map}_\ast(EK_+, Z_f)$. Thus, in 
this paper, we give a careful explanation of how 
$\textstyle{\holim^G_\Delta \mathrm{Map}_c(G^\bullet, \widehat{X})}$ 
is indeed a profinite version of 
$\mathrm{Map}_\ast(EK_+, Z_f)$. Rather than cluttering our introduction with an excess of definitions, we 
 refer the reader to Section \ref{last} for the exact details of this 
 explanation. 
 \begin{Rk} We pause to give an example of 
 $\holim_\Delta \mathrm{Map}_c(G^\bullet, \widehat{X})$ failing to 
 be a discrete $G$--spectrum. All of the following details are 
 expanded upon in \cite[Appendix A]{iterated}. Given distinct primes 
 $p$ and $q$, set \[G = \mathbb{Z}/p \times \mathbb{Z}_q\] (a profinite 
 group of finite virtual 
 cohomological dimension) and 
 let \[X = \textstyle{\bigvee_{n \geq 0}} \,\Sigma^nH(\mathbb{Z}/p[
 \mathbb{Z}/{q^n}]),\] a discrete $G$--spectrum. Then suppose that $\holim_\Delta \mathrm{Map}_c(G^\bullet, \widehat{X})$ is 
 a discrete $G$--spectrum: its $(\mathbb{Z}/p)$--fixed point spectrum 
 $\holim_\Delta \bigl(\mathrm{Map}_c(G^\bullet, \widehat{X})\bigr)^{\mathbb{Z}/p}$ is a discrete $\mathbb{Z}_q$--spectrum, 
 and hence, $\pi_0\bigl(\holim_\Delta \bigl(\mathrm{Map}_c(G^\bullet, \widehat{X})\bigr)^{\mathbb{Z}/p}\bigr)$ is a discrete $\mathbb{Z}_q$--module, 
 a contradiction.
 \end{Rk}
 \par
 For now, we 
 summarize our explanation with the following: it turns out that 
the ``co-steps" in the construction of 
 $\textstyle{\holim^G_\Delta \mathrm{Map}_c(G^\bullet, \widehat{X})}$ 
 are essentially identical to those involved in the 
 construction of a certain $K$--spectrum 
 $\widetilde{Z_K}$ that is 
 equivalent to $\mathrm{Map}_\ast(EK_+, Z_f)$, 
 except that 
 when imitating the construction of $\widetilde{Z_K}$, 
 at each co-step, if one obtains a $G$--spectrum that need not be, in general, 
 a discrete $G$--spectrum, then one makes it a discrete $G$--spectrum 
 in ``the canonical way," by applying the discretization functor (see 
 Definition \ref{discretization}).
 \par
In Section \ref{sec:two}, we define the $K$--spectrum $\widetilde{Z_K}$ 
and show that it is equivalent 
to $\mathrm{Map}_\ast(EK_+, Z_f)$: this reduces our task to relating 
$\textstyle{\holim^G_\Delta \mathrm{Map}_c(G^\bullet, \widehat{X})}$ 
to $\widetilde{Z_K}$. We do not claim any originality for Section \ref{sec:two} 
and we note that $\widetilde{Z_K}$ is closely related 
to the homotopy limit that is used in \cite[second half of page 226]{mitchell} 
to describe $Z^{hK}$. (The main difference between our presentation 
and that of \cite{mitchell} is that the object in \cite{mitchell} 
that plays the role of 
our $K^\bullet$ (below, in Section \ref{sec:two}) is defined differently.)  
\subsection{The pattern and the cases of compact Lie groups and 
profinite $G$-spectra} 
 
Let $H$ be a discrete or profinite group and let $Z$ be an object 
in the corresponding category $\mathtt{Sp}_H$ 
of $H$--spectra: if $H$ is discrete, then $\mathtt{Sp}_H$ is the category of 
naive $H$--spectra considered at the beginning of this Introduction, and 
if $H$ is profinite, then $\mathtt{Sp}_H$ is the full subcategory 
of $\mathrm{Spt}_H$ that consists of the concrete discrete $H$--spectra. In 
both cases, as 
recalled at the beginning and by our 
main result, respectively, there is the following pattern: 
the homotopy fixed 
point spectrum $Z^{hH}$ 
can always be formed by taking the $H$--fixed points of 
some construction ``$\mathrm{Map}_\ast(EH_+, Z)$" (the particular version of the 
spectrum ``$\mathrm{Map}_\ast(EH_+, Z)$" that is used depends 
on the case) that is an object in the category 
$\mathtt{Sp}_H$. 
\begin{Rk}
It was just noted that when $H$ is profinite, the appropriate version 
of ``$\mathrm{Map}_\ast(EH_+, Z)$" is not just a discrete $H$--spectrum, 
but it is also concrete (that is, a concrete discrete $H$--spectrum). 
This can be justified as follows: because $Z$ is concrete, the
$H$--equivariant map \[Z \overset{\simeq}{\longrightarrow} 
\colim_{N \vartriangleleft_o H} \bigl(\holim_\Delta 
\mathrm{Map}_c(H^\bullet, \widehat{Z})\bigr)^{\negthinspace N} 
= \holimH_\Delta \mathrm{Map}_c(H^\bullet, \widehat{Z})\] is 
a weak equivalence of spectra (by \cite[proof of Theorem 4.2]{fibrantmodel} and 
\cite[Theorem 7.2]{agt}; see \cite[page 145]{fibrantmodel} for 
the definition of the map), and hence, the target of the weak equivalence 
(which is the appropriate version 
of ``$\mathrm{Map}_\ast(EH_+, Z)$") 
is concrete (since the homotopy groups of the source and target of the 
weak equivalence are isomorphic as discrete $H$-modules), as desired.  
\end{Rk}
The above pattern also occurs when $H$ is a compact Lie group  
and $\mathtt{Sp}_H$ is the category of naive $H$--equivariant 
spectra (in the context of \cite{lms}): in this case, $Z^{hH}$ is the $H$--fixed points of the 
naive $H$--equivariant spectrum $F(EH_+, Z)$.

Now we again let $H$ be a profinite group and set 
$\mathtt{Sp}_H$ equal to the 
category 
of profinite $H$--spectra, as defined in \cite{Quick}. 
Interestingly, 
we will see that in this case, the above pattern 
does not 
go through all the way. By 
\cite[Remark 3.8, Definition 3.14]{hfplt}, 
$Z^{hH}$ is the $H$--fixed points of the explicit $H$--spectrum 
$\mathrm{Map}(EH, R_HZ)$, where here, $EH$ is regarded as a simplicial profinite 
$H$--set and $R_HZ$ is a functorial fibrant replacement 
of $Z$ in the stable model category $\mathtt{Sp}_H$. Also, the $H$--spectrum 
$\mathrm{Map}(EH,R_HZ)$ is defined as follows: 
for each $m \geq 0$,
\[\mathrm{Map}(EH,R_HZ)_m = \mathrm{map}_{\hat{\mathcal{S}}_\ast}(EH_+, 
(R_HZ)_m),\] where the right-hand side is an instance of the simplicial mapping space 
for the category $\hat{\mathcal{S}}_\ast$ of pointed simplicial profinite sets. 
Thus, in agreement with the pattern, 
the construction $\mathrm{Map}(EH,R_HZ)$ is
indeed a version of ``$\,\mathrm{Map}_\ast(EH_+,Z)$."  

In contrast with the pattern, however, it turns 
out that $\mathrm{Map}(EH,R_HZ)$ is not, in general, a profinite $H$--spectrum. 
To see that this is true, suppose that 
$\mathrm{Map}(EH,R_HZ)$ is always a profinite $H$--spectrum. Then 
\[Z^{hH} =\mathrm{Map}(EH,R_HZ)^H = \lim_H \mathrm{Map}(EH,R_HZ),\] 
where the last expression is a limit in the category of spectra. Since 
the forgetful functor from profinite spectra to spectra is a right adjoint (see 
\cite[Proposition 4.7]{Quick}), limits in profinite spectra are formed in spectra, 
and thus, since $\mathrm{Map}(EH,R_HZ)$ is a profinite $H$--spectrum, the 
above limit can be regarded as a limit in the category of profinite spectra. 
It follows that $Z^{hH}$ must be a profinite spectrum. But $Z^{hH}$ 
is not always a profinite spectrum, by \cite[page 194; Remark 3.16]{hfplt} (see 
also the helpful discussion between Proposition 2.15 and Theorem 2.16 in 
\cite{QuickSpaces}), showing that 
$\mathrm{Map}(EH, R_HZ)$ is not always a profinite 
$H$--spectrum. 
 
We continue to let $H$ be profinite. 
With various properties of the theories of homotopy fixed points for 
discrete and profinite $H$--spectra laid out on the table, it is worth making 
the following observation: in these theories, abstract and explicit realizations 
of homotopy fixed points do not go easily together. In the world of discrete 
$H$--spectra, the homotopy fixed point spectrum is abstractly 
defined as the right derived functor of fixed points, but only when certain 
hypotheses are satisfied, is the homotopy fixed points known to be given by a 
concrete model. In the setting of profinite $H$--spectra, the 
situation is reversed: the homotopy fixed 
points are always given by an explicit model (that is, $\mathrm{Map}(EH, 
R_HZ)^H$, as considered above), but in general, the homotopy fixed points are not 
the right derived functor of fixed points. To see this last point, suppose 
that $Z$ is a profinite $H$--spectrum with $Z^{hH} = (R_HZ)^{H}$. 
Then, by repeating 
an argument that was used above, $Z^{hH} = 
\lim_H R_HZ$ must be a profinite spectrum. Since $Z^{hH}$ 
is not always a profinite spectrum (see above), $Z^{hH}$ 
cannot in general be defined abstractly as the 
output of the right derived functor of fixed points.  
      
\par
We conclude the Introduction with a few comments about our 
notation. We use $\mathcal{S}$ to denote the category of simplicial sets. 
Given a set $S$, we 
let $c_\bullet(S)$ denote the 
constant simplicial set on $S$, and by a slight abuse of this notation, we use 
$c_\bullet(\ast)$ to denote the constant 
simplicial set on the set $\{\ast\}$ that consists of a single point. To avoid 
any possible confusion, we note that $c_\bullet(S)_+$ is $c_\bullet(S)$ 
with a disjoint basepoint added.  
 
\subsection*{Acknowledgements}
I found the main result in this paper during 
the course of discussions with Markus Szymik. I thank Markus 
for these stimulating exchanges. Also, I thank the referee for helpful comments.
\section{$K$--spectrum $\widetilde{Z_K}$ is equivalent to $\mathrm{Map}_\ast(EK_+, Z_f)$}
\label{sec:two}
\par
Recall from \S \ref{subsection} the $K$--spectrum 
$\mathrm{Map}_\ast(EK_+, Z_f)$: for each $n \geq 0$, $K$ acts diagonally 
on the $n$--simplices $K^{n+1}$ of $EK$ and the mapping spectrum 
has its $K$--action induced by conjugation on the level of sets (that is, 
by the formula \[(k \cdot f_j)(k_1, k_2, ..., k_{j+1}) = 
k \cdot f_j(k^{-1}\cdot(k_1, k_2, ..., k_{j+1})),\] where $k, k_1, ..., k_{j+1} 
\in K$ and 
\[\{f_j \: K^{j+1} \to \mathrm{hom}_\ast(\Delta[n], (Z_f)_m)_j\}_{j \geq 0} 
\in \mathcal{S}(EK, \mathrm{hom}_\ast(\Delta[n], (Z_f)_m)),\] 
with 
\[\mathcal{S}(EK, \mathrm{hom}_\ast(\Delta[n], (Z_f)_m)) \cong \mathrm{Map}_{\mathcal{S}_\ast}(EK_+, (Z_f)_m)_n,\] $\mathrm{hom}_\ast(\Delta[n], (Z_f)_m)$ 
is a cotensor in $\mathcal{S}_\ast$, and 
$K$ acts only on $(Z_f)_m$ in the expression 
$\mathrm{hom}_\ast(\Delta[n], (Z_f)_m)$). 

\begin{Def}
Let $K^\bullet$ be the canonical bisimplicial set 
\[K^\bullet \: \Delta^\mathrm{op} \to \mathcal{S}, \ \ \ [n] \mapsto 
(K^\bullet)_n = c_\bullet(K^{n+1}),\] 
with $\mathrm{diag}(K^\bullet) = EK,$ where 
$\mathrm{diag}(K^\bullet)$ is the diagonal of $K^\bullet$. 
\end{Def}

\par
Given a 
simplicial set $L$ and a spectrum $Y$, we write $Y^L$ for the cotensor 
in the simplicial model category $\mathrm{Spt}$. It will be helpful to 
note that \[Y^L = \mathrm{Map}_\ast(L_+, Y).\]

\begin{Def}
Notice that $\displaystyle{\hocolim_{\Delta^\mathrm{op}} K^\bullet} 
\equiv \hocolim_{[n] \in \Delta^\mathrm{op}}\,(K^\bullet)_n$. 
There is an isomorphism $(Z_f)^{(\hocolim_{\Delta^\mathrm{op}} K^\bullet)} \xrightarrow{\,\cong\,} 
\holim_\Delta \, (Z_f)^{(K^\bullet)}$ and the target of this map 
is defined to be the $K$--spectrum $\widetilde{Z_K}$. Thus, we have
\[\widetilde{Z_K} = \holim_\Delta \, (Z_f)^{(K^\bullet)}.\]
\end{Def}

As alluded to in the 
Introduction, the following result -- or at least some version of it -- seems 
to be well-known, but for the sake of completeness, we give a 
proof of the precise version that we need.

\begin{Thm}\label{keyw.e.}
There is a canonical $K$--equivariant map
\[\mathrm{Map}_\ast(EK_+, Z_f) \xrightarrow{\,\simeq\,} \widetilde{Z_K}\] 
that is a weak equivalence in $\mathrm{Spt}$.
\end{Thm}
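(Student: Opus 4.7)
The plan is to show that both $\mathrm{Map}_\ast(EK_+, Z_f)$ and $\widetilde{Z_K}$ are naturally identified with cotensors of the fibrant spectrum $Z_f$ by simplicial sets that are canonically weakly equivalent, and that this identification is $K$-equivariant. Specifically, since $(Z_f)^L = \mathrm{Map}_\ast(L_+, Z_f)$ for any simplicial set $L$, we have $\mathrm{Map}_\ast(EK_+, Z_f) = (Z_f)^{EK}$, while by definition $\widetilde{Z_K} = \holim_\Delta (Z_f)^{K^\bullet}$. I would reduce the theorem to the statement that a standard comparison map $\hocolim_{\Delta^{\mathrm{op}}} K^\bullet \to \mathrm{diag}(K^\bullet) = EK$ is a $K$-equivariant weak equivalence.

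First, I would record the canonical isomorphism
\[(Z_f)^{\hocolim_{\Delta^{\mathrm{op}}} K^\bullet} \xrightarrow{\,\cong\,} \holim_\Delta (Z_f)^{K^\bullet} = \widetilde{Z_K},\]
which is the defining isomorphism mentioned in the excerpt and which follows from the adjunction between cotensors and homotopy colimits in the simplicial model category $\mathrm{Spt}$. Next I would invoke the classical Bousfield-Kan-type result that for any bisimplicial set $Y_{\bullet,\bullet}$, viewed as a simplicial diagram in $\mathcal{S}$, there is a natural weak equivalence $\hocolim_{\Delta^{\mathrm{op}}} Y \xrightarrow{\,\simeq\,} \mathrm{diag}(Y)$ (see, e.g., Bousfield-Kan Chapter XII). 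Applied to $Y = K^\bullet$, this yields a natural weak equivalence
\[\varphi \: \hocolim_{\Delta^{\mathrm{op}}} K^\bullet \xrightarrow{\,\simeq\,} EK.\]
Since all simplicial sets are cofibrant and $Z_f$ is fibrant, the cotensor functor $(Z_f)^{(-)}$ sends $\varphi$ to a weak equivalence
\[(Z_f)^{EK} = \mathrm{Map}_\ast(EK_+, Z_f) \xrightarrow{\,\simeq\,} (Z_f)^{\hocolim_{\Delta^{\mathrm{op}}} K^\bullet},\]
which composed with the previous isomorphism gives the desired weak equivalence to $\widetilde{Z_K}$.

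The routine, but main, obstacle is verifying that the whole composite is $K$-equivariant for the conjugation action recalled at the start of the section. The $K$-action on $EK$ is the diagonal action on each $K^{n+1}$; on $K^\bullet$ it is the objectwise diagonal action on $(K^\bullet)_n = c_\bullet(K^{n+1})$; and on the two-sided bar model of $\hocolim_{\Delta^{\mathrm{op}}} K^\bullet$ it is the action induced levelwise from these. The comparison map $\varphi$ is natural in the bisimplicial variable and so intertwines the two $K$-actions, so the induced map on cotensors intertwines the conjugation $K$-actions on $\mathrm{Map}_\ast(EK_+, Z_f)$ and on $\widetilde{Z_K}$ (the latter being inherited from the conjugation action on each $(Z_f)^{K^{n+1}}$ through the homotopy limit). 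Once this equivariance bookkeeping is discharged — essentially by appealing to the naturality of the Bousfield-Kan map and the cotensor — the proof is complete.
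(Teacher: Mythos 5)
Your proposal is correct and follows essentially the same route as the paper: identify $\mathrm{Map}_\ast(EK_+,Z_f)$ with the cotensor $(Z_f)^{EK}$, use the Bousfield--Kan comparison $\hocolim_{\Delta^{\mathrm{op}}} K^\bullet \xrightarrow{\simeq} \mathrm{diag}(K^\bullet)=EK$ (the paper factors it through $|K^\bullet|$), and apply $(Z_f)^{(-)}$, checking that the result is a weak equivalence because the source and target of the comparison map are cofibrant and $Z_f$ is levelwise fibrant (the paper verifies this levelwise by hand, while you cite the general simplicial-model-category fact). The equivariance bookkeeping via naturality is exactly the paper's implicit argument as well.
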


\begin{proof}
Since $\mathrm{Map}_\ast(EK_+, Z_f)$ is the cotensor $(Z_f)^{EK}$, 
it suffices to construct a canonical $K$--equivariant map 
$(Z_f)^{EK} \to (Z_f)^{(\hocolim_{\Delta^\mathrm{op}} K^\bullet)}$ 
that is a weak equivalence of spectra. 
Notice that there is the composition
\[
\widetilde{\phi_\ast} \: 
\displaystyle{\hocolim_{\Delta^\mathrm{op}} K^\bullet} 
\xrightarrow{\,\simeq\,} 
|K^\bullet| \xrightarrow{\,\,\cong\,\,} \mathrm{diag}(K^\bullet) = EK
\] 
of canonical $K$--equivariant 
maps, with the first map, \[\phi_\ast \: \hocolim_{\Delta^\mathrm{op}} 
K^\bullet \xrightarrow{\,\simeq\,} |K^\bullet|\] (our label for this map comes from 
\cite[Corollary 18.7.5]{hirschhorn}, where this map is referred to as 
``the Bousfield-Kan map"), and the second map equal to 
a weak equivalence and 
an isomorphism (as labeled above), respectively. Then the desired 
map is just $(Z_f)^{\mspace{1mu}
\widetilde{\phi_\ast}
}$ and we only need to show 
that this map is a weak equivalence: to do this, since a strict weak 
equivalence of spectra is a (stable) weak equivalence, it suffices to show that 
for each $m \geq 0$, the map 
\begin{align*}\mathrm{Map}_\ast&(|K^\bullet|_+, Z_f)_m 
\\ & = \mathrm{Map}_{\mathcal{S}_\ast}(|K^\bullet|_+, (Z_f)_m) 
\to \mathrm{Map}_{\mathcal{S}_\ast}\bigl(\bigl(\hocolim_{\Delta^\mathrm{op}} 
K^\bullet\bigr)_{\mspace{-2mu}+}, (Z_f)_m\bigr)\end{align*} is a weak equivalence in $\mathcal{S}$.

If $L$ and $L'$ are simplicial sets, then $L_+ \wedge (L')_+ \cong 
(L \times L')_+$, and hence, we only need to show that each map
\[\mathrm{Map}_{\mathcal{S}}(|K^\bullet|, (Z_f)_m) 
\to \mathrm{Map}_{\mathcal{S}}\bigl(\hocolim_{\Delta^\mathrm{op}} 
K^\bullet, (Z_f)_m\bigr)\] is a weak equivalence in $\mathcal{S}$: 
this follows from the fact that in $\mathcal{S}$, 
$\phi_\ast$ is a weak equivalence and 
$(Z_f)_m$ is fibrant.
\end{proof}
       
The equivalence in 
Theorem \ref{keyw.e.} implies that to relate the discrete $G$--spectrum 
$\textstyle{\holim^G_\Delta \mathrm{Map}_c(G^\bullet, \widehat{X})}$ 
to the $K$--spectrum 
$\mathrm{Map}_\ast(EK_+, Z_f)$, we can just as well compare 
$\textstyle{\holim^G_\Delta \mathrm{Map}_c(G^\bullet, \widehat{X})}$ 
to $\widetilde{Z_K}$. To do this comparison, it 
will be helpful to write $\widetilde{Z_K}$ 
a little differently: there are isomorphisms 
\begin{align*}
\widetilde{Z_K} & = 
\holim_{[n] \in \Delta} \,(Z_f)^{(c_\bullet(K^{n+1}))} \\ 
& \cong 
\holim_{[n] \in \Delta} 
\,(Z_f)^{\mspace{-1mu}(\mspace{1.7mu}\prod_{\ell \in \{1, 2, ..., n+1\}} 
\mspace{-2mu}c_\bullet(K))}
 \\ & \cong 
\holim_{[n] \in \Delta} \,\underbrace{(\cdots 
(((}_{n+1}Z_f\underbrace{)^{c_\bullet(K)})^{c_\bullet(K)}) 
\cdots)^{c_\bullet(K)}}_{n+1}.
\end{align*}
\section{Building $\mathrm{Map}_c(G, X)$ from fixed points of cotensors}

We begin this section by recalling that given $X \in \mathrm{Spt}_G$, the 
$G$--action on the discrete $G$--spectrum $\mathrm{Map}_c(G,X)$ 
is induced by the $G$--action on the level of sets that is defined by 
$(g \cdot (h_m)_n)(g') = (h_m)_n(g'g)$, where $g, g' \in G$ and, for 
each $m, n \geq 0$, $(h_m)_n \in \mathrm{Map}_c(G,(X_m)_n)$.

Notice that there are natural $G$--equivariant isomorphisms
\begin{align*}
\mathrm{Map}_c(G, X) & \cong \colim_{N \vartriangleleft_o G} 
\,\textstyle{\prod_{G/N} X} 
\\
& \cong \displaystyle{\colim_{N \vartriangleleft_o G}} 
\, \mathrm{Map}_\ast(\textstyle{\bigvee_{G/N}c_\bullet(\ast)_+, X)}\\
& \cong \displaystyle{\colim_{N \vartriangleleft_o G}} 
\, \mathrm{Map}_\ast(c_\bullet(G/N)_+, X),
\end{align*} 
where the last expression above uses the following convention.
\begin{Def}\label{def:action}
The spectrum $\mathrm{Map}_\ast(c_\bullet(G/N)_+, X)$ has a 
$G/N$--action that is determined 
by the formula $(g_1N \cdot f_j)(g_2N) = f_j(g_2g_1N),$ for $g_1, g_2 \in G$ and \[\{f_j\}_{j \geq 0} \in \mathcal{S}(c_\bullet(G/N), \mathrm{hom}_\ast(\Delta[n], X_m))\] (for example, see the beginning of Section \ref{sec:two}). 
\end{Def}

We have shown 
that there is a natural isomorphism 
\[\mathrm{Map}_c(G, X) \cong \colim_{N \vartriangleleft_o G} 
\,\mathrm{Map}_\ast(c_\bullet(G/N)_+, X)\] 
in $\mathrm{Spt}_G$; this observation was made in 
\cite[page 210]{hGal} in the context of simplicial discrete $G$--sets.
\begin{Prop}\label{cotensor}
If $N$ is an open normal subgroup of $G$, then 
there is a natural $G/N$--equivariant isomorphism 
\[\mathrm{Map}_\ast(c_\bullet(G)_+, X)^N \cong  
\mathrm{Map}_\ast(c_\bullet(G/N)_+, X)\] 
of $G/N$--spectra, 
where $\mathrm{Map}_\ast(c_\bullet(G)_+,X)$ has the $G$--action given by conjugation, $\mathrm{Map}_\ast(c_\bullet(G)_+, X)^N$ denotes 
the $N$--fixed point spectrum {\rm(}and not a cotensor{\rm )}, 
and $\mathrm{Map}_\ast(c_\bullet(G/N)_+, X)$ has the $G/N$--action 
given in Definition {\rm\ref{def:action}}.
\end{Prop}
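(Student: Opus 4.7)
Proof proposal: The plan is to reduce the statement to an explicit level-wise identification and then verify equivariance by direct computation.

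First, using the canonical splitting $c_\bullet(S)_+ \cong \bigvee_{s \in S} c_\bullet(\ast)_+$ of pointed simplicial sets for any set $S$, there are natural isomorphisms of spectra
\[
\mathrm{Map}_\ast(c_\bullet(G)_+, X) \cong \prod_{g \in G} X
\quad \text{and} \quad
\mathrm{Map}_\ast(c_\bullet(G/N)_+, X) \cong \prod_{gN \in G/N} X,
\]
so that at each spectrum level $m$ and simplicial level $n$ both sides are just tuples in $(X_m)_n$ indexed by $G$ or $G/N$. Unwinding the definitions, the conjugation $G$-action on the left-hand side acts by a formula of shape $h \cdot (y_g) = (h \cdot y_{gh})$ (modulo the chosen convention for the translation action on $c_\bullet(G)$), combining a permutation of indices with the $X$-action on coefficients. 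By contrast, Definition~\ref{def:action} gives a $G/N$-action on $\prod_{G/N} X$ that is purely an index-permutation and does not involve the $X$-action at all.

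The heart of the proof is to write down the bridging map
\[
\alpha \: \mathrm{Map}_\ast(c_\bullet(G)_+, X)^N \longrightarrow \mathrm{Map}_\ast(c_\bullet(G/N)_+, X),
\qquad \alpha\bigl((y_g)\bigr)_{gN} := g \cdot y_g,
\]
with proposed inverse $\beta((z_{gN}))_g := g^{-1} \cdot z_{gN}$, and then perform three routine checks: (i) $\alpha$ is well-defined on cosets, using the twisted $N$-equivariance $y_{gn} = n^{-1} \cdot y_g$ that $N$-fixed points of the conjugation action satisfy, which gives $(gn) \cdot y_{gn} = g \cdot y_g$; (ii) $\alpha$ and $\beta$ are mutually inverse, immediate from their formulas; and (iii) $\alpha$ is $G/N$-equivariant, which is a short computation confirming that the $X$-twist built into $\alpha$ is exactly what converts the $X$-twisted conjugation action on the source into the untwisted permutation action of Definition~\ref{def:action} on the target.

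Since $\alpha$ is defined level-wise in both the simplicial and spectrum directions, using only the target $G$-action on $X$ (which commutes with face, degeneracy, and spectrum structure maps), it automatically assembles into an isomorphism of $G/N$-spectra, and naturality in $X$ is transparent. The main obstacle is purely bookkeeping: keeping straight the distinction between the conjugation action (which uses the $G$-action on $X$) and the Definition~\ref{def:action} action (which does not), and verifying that the twist $z_{gN} := g \cdot y_g$ is precisely the ingredient that reconciles the two.
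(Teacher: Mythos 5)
Your proposal is correct and follows essentially the same route as the paper: both reduce to the set-level untwisting bijection $\mathrm{Sets}(G,W)^N \cong \mathrm{Sets}(G/N,W)$ given by $f \mapsto \bigl(gN \mapsto g\cdot f(g^{\pm 1})\bigr)$, your $\prod_G X$ versus $\prod_{G/N} X$ packaging being the same data as the paper's hom-sets $\mathcal{S}(c_\bullet(G), \mathrm{hom}_\ast(\Delta[n],X_m))$ after the $(-)_+$ adjunction. The only discrepancy is the left-versus-right translation convention on $c_\bullet(G)$ (your $g\cdot y_g$ versus the paper's $g\cdot f(g^{-1})$), which you flag explicitly and which does not affect the argument.
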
 
\begin{proof}
To verify this result, it suffices to show that on the level of simplices 
there is a natural $G/N$--equivariant isomorphism
\[\bigl(\mathrm{Map}_{\mathcal{S}_\ast}(c_\bullet(G)_+, 
X_m)^N\bigr)_{\mspace{-2mu}n} \cong  
\mathrm{Map}_{\mathcal{S}_\ast}(c_\bullet(G/N)_+, 
X_m)_n,\] and hence, we only 
need to show that there is a natural $G/N$--equivariant bijection
\[\mathcal{S}_\ast(c_\bullet(G)_+, \mathrm{hom}_\ast(\Delta[n], X_m))^N 
\cong \mathcal{S}_\ast(c_\bullet(G/N)_+, \mathrm{hom}_\ast(\Delta[n], X_m))\] 
of sets, where the $G$--action on 
$\mathcal{S}_\ast(c_\bullet(G)_+, \mathrm{hom}_\ast(\Delta[n], X_m))$ is 
such that $G$ only acts on $X_m$ in the cotensor 
$\mathrm{hom}_\ast(\Delta[n], X_m)$. 
\par
Since the functor $(-)_+ \: 
\mathcal{S} \to \mathcal{S}_\ast$ 
is left adjoint to the forgetful functor, our last assertion above 
is equivalent 
to there being a natural $G/N$--equivariant bijection
\[\mathcal{S}(c_\bullet(G), \mathrm{hom}_\ast(\Delta[n], X_m))^N 
\cong \mathcal{S}(c_\bullet(G/N), \mathrm{hom}_\ast(\Delta[n], X_m)).\] 
The existence of this $G/N$--equivariant bijection 
follows from the fact that if $W$ is any $G$--set, then, 
letting $\mathrm{Sets}$ denote the 
category of sets, the natural function 
\[\lambda \: \mathrm{Sets}(G, W)^N \to \mathrm{Sets}(G/N, W), 
\ \ \ f \mapsto \Bigl[\lambda(f) \: gN \mapsto g\cdot f(g^{-1})\Bigr]\] 
is a $G/N$--equivariant isomorphism. Here, of course, $G$ acts 
on $\mathrm{Sets}(G,W)$ by conjugation and the $G/N$--action 
on $\mathrm{Sets}(G/N, W)$ is defined by \[(g_1N \cdot h)(g_2N) 
= h(g_2g_1N), \ \ \  g_1, g_2 \in G, \ h \in \mathrm{Sets}(G/N, W).
\qedhere 
\]
\end{proof}
\par
By Proposition \ref{cotensor} and the 
discussion that precedes it, we immediately obtain the following result.
\begin{Prop}\label{various}
There is an isomorphism
\[\mathrm{Map}_c(G,X) \cong \colim_{N \vartriangleleft_o G} 
\mathrm{Map}_\ast(c_\bullet(G)_+, X)^N\] of discrete $G$--spectra.
\end{Prop}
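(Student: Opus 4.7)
The plan is to chain together two ingredients that are already in place. The discussion immediately preceding Proposition \ref{cotensor} establishes a natural $G$--equivariant isomorphism
\[\mathrm{Map}_c(G,X) \cong \colim_{N \vartriangleleft_o G} \mathrm{Map}_\ast(c_\bullet(G/N)_+, X)\]
in $\mathrm{Spt}_G$, and Proposition \ref{cotensor} produces, for each open normal $N$, a natural $G/N$--equivariant isomorphism
\[\mathrm{Map}_\ast(c_\bullet(G)_+, X)^N \xrightarrow{\,\cong\,} \mathrm{Map}_\ast(c_\bullet(G/N)_+, X).\]
So the first step is simply to substitute the second isomorphism (applied to each $N$) inside the colimit appearing in the first isomorphism.

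For this substitution to yield an isomorphism of diagrams over the filtered poset of open normal subgroups, the main thing to verify is naturality in $N$. Concretely, for $N' \vartriangleleft_o G$ with $N' \subseteq N$, the transition map in the left-hand diagram is induced by the quotient $G/N' \to G/N$, which produces the map $\mathrm{Map}_\ast(c_\bullet(G/N)_+, X) \to \mathrm{Map}_\ast(c_\bullet(G/N')_+, X)$ by precomposition; on the right-hand side, the transition map is the inclusion of fixed points $\mathrm{Map}_\ast(c_\bullet(G)_+, X)^N \hookrightarrow \mathrm{Map}_\ast(c_\bullet(G)_+, X)^{N'}$. I would unwind the explicit description of the bijection $\lambda$ from the proof of Proposition \ref{cotensor} (the formula $\lambda(f)(gN) = g \cdot f(g^{-1})$) on simplices to check that these two transition maps agree under the isomorphism; this is a direct check, and is the only nontrivial step, though I do not anticipate any real obstacle since $\lambda$ was constructed naturally in the $G$--set $W = \mathrm{hom}_\ast(\Delta[n], X_m)$.

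Once naturality is in hand, passing to the colimit yields the asserted isomorphism on the level of underlying spectra. The final step is to note that this is an isomorphism in $\mathrm{Spt}_G$: the target $\colim_{N \vartriangleleft_o G} \mathrm{Map}_\ast(c_\bullet(G)_+, X)^N$ is a filtered colimit of fixed point spectra and is therefore automatically a discrete $G$--spectrum (indeed its own filtered colimit presentation over the open normal subgroups of $G$), and the isomorphism is $G$--equivariant because each component isomorphism from Proposition \ref{cotensor} is $G/N$--equivariant and the compatible $G$--actions on both sides are defined through these quotient actions.
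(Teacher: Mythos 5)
Your proposal is correct and is exactly the paper's argument: the paper states that Proposition \ref{various} follows ``immediately'' from Proposition \ref{cotensor} together with the isomorphism $\mathrm{Map}_c(G,X) \cong \colim_{N \vartriangleleft_o G} \mathrm{Map}_\ast(c_\bullet(G/N)_+, X)$ established just before it. You simply spell out the naturality-in-$N$ check that the paper leaves implicit, and that check goes through as you describe.
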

\begin{Rk} 
For the duration of this remark, suppose that $G \neq 
\{{e}_{_{\scriptscriptstyle{G}}}\}$. 
The right-hand side of the isomorphism in 
Proposition \ref{various} can be written as the discrete $G$--spectrum 
$\colim_{N \vartriangleleft_o G} 
\bigl(X^{c_\bullet(G)}\bigr)^N,$ where $X^{c_\bullet(G)}$ is a 
cotensor for spectra. Interestingly, by 
\cite[proof of Theorem 2.3]{fibrantmodel}, the cotensor 
$\bigl(X^{c_\bullet(G)}\bigr)_{\negthinspace G}$ for the simplicial 
model category $\mathrm{Spt}_G$ can also be written as 
$\colim_{N \vartriangleleft_o G} 
\bigl(X^{c_\bullet(G)}\bigr)^N,$ where $G$ acts on $X^{c_\bullet(G)}$ 
by acting only on $X$. However, despite their cosmetic similarity, 
$\mathrm{Map}_c(G,X)$ and 
$\bigl(X^{c_\bullet(G)}\bigr)_{\negthinspace G}$ 
are, in general, 
not isomorphic as discrete $G$--spectra, because of their different $G$--actions. For example, suppose that 
$Y_1$ and $Y_2$ are discrete $G$--spectra, with each having the trivial 
$G$--action. Then 
\begin{align*}
\mathrm{Spt}_G\bigl(Y_1, \bigl((Y_2)^{c_\bullet(G)}\bigr)_G\bigr) 
& \cong \mathrm{Spt}_G\negthinspace\,\bigl(Y_1, \bigl((Y_2)^{(\coprod_{g \in G} c_\bullet(\ast))}\bigr)_G\bigr)
\\ & \cong \textstyle{\prod_{g \in G} \mathrm{Spt}_G(Y_1, Y_2)}\\ & \cong 
\textstyle{\prod_{g \in G} \mathrm{Spt}(Y_1, Y_2)}\end{align*} 
and 
\[\mathrm{Spt}_G(Y_1, \mathrm{Map}_c(G, Y_2)) \cong \mathrm{Spt}(Y_1, Y_2),\] and hence, 
$\bigl((Y_2)^{c_\bullet(G)}\bigr)_G$ and $\mathrm{Map}_c(G, Y_2)$ 
are, in general, not isomorphic as discrete $G$--spectra. 
\end{Rk}
\section{At each co-step, 
$\textstyle{\holim^G_\Delta \mathrm{Map}_c(G^\bullet, \widehat{X})}$ 
follows $\widetilde{Z_K}$ exactly, then makes the output into a discrete $G$--spectrum}  
\label{last}
\par
In Section \ref{sec:two}, we showed that there is a $K$-equivariant 
weak equivalence of spectra 
between $\mathrm{Map}_\ast(EK_+, Z_f)$ 
and 
\begin{equation}\zig\label{identify}
\widetilde{Z_K} = \holim_{[n] \in \Delta} \,\underbrace{(\cdots 
(((}_{n+1}Z_f\underbrace{)^{c_\bullet(K)})^{c_\bullet(K)}) 
\cdots)^{c_\bullet(K)}}_{n+1}.\end{equation}
We remark that (\ref{identify}) contains a slight 
abuse of notation: the equality in (\ref{identify}) is actually a 
natural identification between isomorphic $K$--spectra. Identity 
(\ref{identify}) is key to understanding the main result of this paper, but 
to explain this result, we need one more tool, given in 
Definition \ref{discretization} below. After some discussion 
of the functor recalled in this definition, we will explain 
the main result.

Throughout this section, $G$ denotes an arbitrary profinite group.

\subsection{The discretization functor for $G$--spectra}

As noted in 
\cite[Remark 2.2]{jointwithTylertwo}, the 
isomorphism \[W \cong \colim_{N \vartriangleleft_o G} W^N\] 
satisfied by every $W \in \mathrm{Spt}_G$ is the basic 
fact behind the following.

\begin{Def}[{\cite[Remark 2.2]{jointwithTylertwo}}]
\label{discretization}
Let $G\mathrm{-Spt}$ be the category of (naive) $G$--spectra. The 
right adjoint of the forgetful functor $U_G \: 
\mathrm{Spt}_G \to G\mathrm{-Spt}$ is the {\em discretization} functor 
\[(-)_d \: G\mathrm{-Spt} \to \mathrm{Spt}_G, \ \ \ Y \mapsto (Y)_d = 
\colim_{N \vartriangleleft_o G} Y^N;\] $(Y)_d$ is ``the discrete 
$G$--subspectrum" of the $G$--spectrum $Y$. The application of the 
functor $(-)_d$ is the canonical way 
to ``convert" $Y$ into a discrete $G$--spectrum (the author 
would like to mention 
that he learned part of this perspective on $(-)_d$ from 
\cite[the brief discussion of (1.2.2)]{hGal}). It goes without saying that if the $G$--spectrum $Y$ already is a discrete $G$--spectrum, then $(Y)_d \cong Y$.
\end{Def}

Since $\mathrm{Spt}$ is a combinatorial model category, the category 
$G\mathrm{-Spt}$, which is isomorphic to the diagram category of 
functors $\{\ast_G\} \to \mathrm{Spt}$ out of the one-object groupoid $\{\ast_G\}$ 
associated to $G$, has an injective model structure (for example, 
see \cite[Proposition A.2.8.2]{luriebook}) in which a morphism 
of $G$--spectra is a weak equivalence (cofibration) if and only if it is a 
weak equivalence (cofibration) in $\mathrm{Spt}$. Thus, 
the left adjoint $U_G \: \mathrm{Spt}_G \to G\mathrm{-Spt}$ preserves 
weak equivalences and cofibrations, giving the next result, which 
gives some homotopical content to the fact that the discretization 
functor $(-)_d$ is the most natural way to convert a $G$--spectrum into a 
discrete $G$--spectrum. 

\begin{Thm}
\label{quillenpair}
The functors $(U_G, (-)_d)$ are a Quillen pair. In particular, if $Y$ is a 
fibrant $G$--spectrum, $\colim_{N \vartriangleleft_o G} Y^N$ is a fibrant 
discrete $G$--spectrum.
\end{Thm}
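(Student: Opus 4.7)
The plan is to deduce both assertions from the preparatory observation (made in the paragraph immediately preceding the theorem) that $U_G$ preserves cofibrations and weak equivalences, combined with the standard formal properties of Quillen adjunctions. The key input is that cofibrations and weak equivalences in both $\mathrm{Spt}_G$ and the injective model structure on $G\mathrm{-Spt}$ are detected on underlying spectra, and $U_G$ leaves the underlying map unchanged.

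For the first assertion, I would note that since $U_G$ preserves both cofibrations and weak equivalences, it in particular preserves trivial cofibrations. Combined with the adjunction $U_G \dashv (-)_d$ recorded in Definition \ref{discretization}, the standard characterization of Quillen pairs, namely that the left adjoint preserves cofibrations and trivial cofibrations, immediately yields that $(U_G, (-)_d)$ is a Quillen pair.

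For the second (``In particular'') assertion, I would invoke the general principle that the right member of a Quillen adjunction preserves fibrant objects. Applying this to the Quillen pair $(U_G, (-)_d)$ just established, if $Y$ is fibrant in $G\mathrm{-Spt}$ then $(Y)_d$ is fibrant in $\mathrm{Spt}_G$; unwinding the formula for $(-)_d$ from Definition \ref{discretization} then presents this fibrant discrete $G$--spectrum in the form $\colim_{N \vartriangleleft_o G} Y^N$, as desired. There is no real obstacle to this argument, since the preceding paragraph has essentially done all the work; the only small point warranting explicit mention is the verification that the cofibrations in $\mathrm{Spt}_G$ as defined in \cite[Section 3]{cts} genuinely agree with underlying-spectrum cofibrations, and this is immediate from the construction of that model structure.
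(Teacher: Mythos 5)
Your argument is correct and takes essentially the same route as the paper: the paper's proof is exactly the paragraph preceding the theorem, which notes that cofibrations and weak equivalences in both $\mathrm{Spt}_G$ and the injective structure on $G\mathrm{-Spt}$ are detected on underlying spectra, so the left adjoint $U_G$ preserves (trivial) cofibrations, making $(U_G,(-)_d)$ a Quillen pair whose right adjoint therefore preserves fibrant objects. Nothing further is needed.
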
 

\begin{Rk}
It is well-known that, as with most combinatorial model categories 
that consist of objects built out of simplicial presheaves on the canonical 
site of finite discrete $G$--sets, it is not easy to produce fairly explicit
examples of fibrant discrete $G$--spectra (for example, see \cite[page 1049]{GJlocal} and \cite[Introduction]{fibrantmodel}), and thus, one example of the utility of 
Theorem \ref{quillenpair} is that it provides a tool for doing this. 
\end{Rk}

\begin{Rk} We make a well-known observation that is a preparatory comment 
for the next remark below. The left adjoint $\mathrm{Spt} \to G\mathrm{-Spt}$ 
that sends a spectrum to itself, but now regarded as a $G$--spectrum that 
is equipped with the trivial $G$--action, preserves weak equivalences and 
cofibrations, and hence, the right adjoint
\[\lim_{\{\ast_G\}}\,(-) \: G\mathrm{-Spt} \to \mathrm{Spt}, \ \ \ Y \mapsto 
\lim_{\{\ast_G\}} Y = Y^G\] is a 
right Quillen functor. It follows that if $Y \to Y_\mathtt{f}$ is a trivial cofibration 
to a fibrant object, in $G\mathrm{-Spt}$, then 
\[Y^{hG} = (Y_\mathtt{f})^G,\] the right derived functor of fixed points $(-)^G 
\: G\mathrm{-Spt} \to \mathrm{Spt}$ 
applied to $Y$, is the homotopy fixed point spectrum of $Y$.
\end{Rk}

\begin{Rk}
Theorem \ref{quillenpair} has the following curious consequence: if 
$Y$ is any $G$--spectrum and $Y \to Y_\mathtt{f}$ is a trivial cofibration to 
a fibrant object, in $G\mathrm{-Spt}$, then $(Y_\mathtt{f})_d$ is a fibrant 
discrete $G$--spectrum, and hence, there is a weak equivalence
\begin{equation}\zig
\label{w.e.}
Y^{hG} = (Y_\mathtt{f})^G \xrightarrow{\,\cong\,} 
((Y_\mathtt{f})_d)^G \xrightarrow{\,\simeq\,} 
(((Y_\mathtt{f})_d)_{fG})^G = ((Y_\mathtt{f})_d)^{hG},\end{equation}
where the isomorphism is as in 
\cite[proof of Theorem 2.3: top of page 141]{fibrantmodel} 
and the weak equivalence is obtained by taking the $G$--fixed points 
of the natural trivial cofibration 
$(Y_\mathtt{f})_d \xrightarrow{\,\simeq\,} ((Y_\mathtt{f})_d)_{fG}$ 
in $\mathrm{Spt}_G$ that is associated to a fibrant replacement functor 
$(-)_{fG} \: \mathrm{Spt}_G \to \mathrm{Spt}_G$. The weak equivalence 
in (\ref{w.e.}) shows that for any $G$--spectrum $Y$, the ``discrete homotopy 
fixed point spectrum" $Y^{hG}$ is equivalent to the ``profinite 
homotopy fixed point spectrum" $((Y_\mathtt{f})_d)^{hG}$. This conclusion is a 
``discrete analogue" of the fact that the homotopy fixed point spectrum for an 
arbitrary continuous $G$--spectrum $\holim_i X_i$ is equivalent to 
the ``profinite homotopy fixed points" 
$(\holim^G_i (X_i)_{fG})^{hG}$ 
of the discrete $G$--spectrum $\holim^G_i (X_i)_{fG}$ 
\cite[Corollary 2.6]{jointwithTylertwo}.  
\end{Rk}  

\subsection{The main result}

Now we are ready to give the main result of this paper. 
Let $X$ be any discrete $G$--spectrum. Notice that, by 
Proposition \ref{various}, there is an isomorphism 
\[\mathrm{Map}_c(G, X) \cong \bigl(X^{c_\bullet(G)}\bigr)_{\negthinspace d}\,,\] 
where $X^{c_\bullet(G)}$ is a $G$--spectrum with $G$--action given by 
conjugation. Also, we have 
\[
\displaystyle{\holimG_\Delta \mathrm{Map}_c(G^\bullet, \widehat{X})} = 
\Bigl(\holim_{[n] \in \Delta} 
\underbrace{\mathrm{Map}_c(G, ..., \mathrm{Map}_c(G, \mathrm{Map}_c(G,}
_{n+1} \widehat{X}\underbrace{)) 
\cdots )}_{n+1}\,\Bigr)_{\negthinspace d}\,.\] Repeated application of the first of the above two conclusions,  
to the second conclusion, yields an isomorphism
\begin{equation*} 
\holimG_\Delta \mathrm{Map}_c(G^\bullet, \widehat{X}) \cong
\Bigl(\holim_{[n] \in \Delta} \underbrace{\bigl(\bigl(\cdots \bigl(\bigl(\bigl(}_{2(n+1)-1}\bigl(\widehat{X}\bigr)\underbrace{
\underbrace{
\underbrace{
^{^{\negthinspace \scriptstyle{\negthinspace 
c_\bullet(G)}}}\bigr)_{\negthinspace d}
}_{\text{once}}
\mspace{1mu}\bigr)^
{c_\bullet(G)}\bigr)_{\negthinspace d}
}_{\text{twice}} 
\cdots \bigr){^{^{
\negthinspace 
\scriptstyle{c_\bullet(G)}}}}\bigr)_{\negthinspace d}}_{(n+1) \ \text{times}}\,\Bigr)_{\negthinspace \mspace{-1.5mu} d} 
\end{equation*} of discrete $G$--spectra.

We recall (\ref{identify}) for the purpose of comparing it with 
the above isomorphism: 
\begin{equation}\zig\label{identifyrepeated}
\widetilde{Z_K} = \holim_{[n] \in \Delta} \,\underbrace{(\cdots 
(((}_{n+1}Z_f\underbrace{\underbrace{\underbrace{)^{c_\bullet(K)}}_{\text{once}})^{c_\bullet(K)}}_{\text{twice}}) 
\cdots)^{c_\bullet(K)}}_{(n+1) \ \text{times}}\,.\end{equation} Now the desired 
conclusion is clear: the construction of the discrete $G$--spectrum 
$\holim^G_\Delta \mathrm{Map}_c(G^\bullet, \widehat{X})$ -- whose 
$G$--fixed points often (that is, whenever $X$ is a concrete discrete $G$--spectrum) serve as a model for the homotopy 
fixed point spectrum $X^{hG}$ -- follows exactly the construction of the 
$K$--spectrum $\mathrm{Map}_\ast(EK_+, Z_f)$ (modulo
a natural identification with the right-hand side of (\ref{identifyrepeated})), 
subject to the natural constraint that whenever following the 
construction of $\mathrm{Map}_\ast(EK_+, Z_f)$ yields a 
$G$--spectrum that is not necessarily in $\mathrm{Spt}_G$ (that is, after 
each formation of a cotensor that has the form $W^{c_\bullet(G)}$, 
for some discrete $G$--spectrum $W$, and after forming the 
homotopy limit in $\mathrm{Spt}$), one applies the discretization 
functor $(-)_d$.
    
\begin{Rk}
We consider the last observation above in slightly more detail. Recall 
that $G$ is any profinite group and let $W$ denote any object in 
$\mathrm{Spt}_G$ that is fibrant as a spectrum. 
Also, let $I$ be the directed set of finite subsets 
of $G$, partially ordered by inclusion. For any integer $t$, there are 
$G$--equivariant isomorphisms
\begin{align*}
\pi_t\bigl(W^{c_\bullet(G)}\bigr) & \cong \pi_t\bigl(\textstyle{\prod}_{G} W\bigr) 
\cong \prod_G \pi_t(W) \\ & \cong \lim_{(g_1, g_2, ..., g_k) \in I} 
\Bigl(\pi_t(W)_{g_1} \times \pi_t(W)_{g_2} \times \cdots \times \pi_t(W)_{g_k}\Bigr),
\end{align*} 
where 
each $\pi_t(W)_{g_i}$ denotes a copy of $\pi_t(W)$ indexed by $g_i$. Since 
the finite product $\pi_t(W)_{g_1} \times \pi_t(W)_{g_2} \times \cdots 
\times \pi_t(W)_{g_k}$ in the category of abelian groups coincides with the 
product in the category of discrete $G$--modules, we see that 
the $G$--module $\pi_t\bigl(W^{c_\bullet(G)}\bigr)$ is an inverse limit of discrete $G$--modules. 
Note that if $W^{c_\bullet(G)}$ is a discrete $G$--spectrum (or even just 
weakly equivalent in $G$--$\mathrm{Spt}$ to a discrete $G$--spectrum), then 
the ``pro-discrete" $G$--module 
$\pi_t\bigl(W^{c_\bullet(G)}\bigr)$ is a discrete $G$--module. For arbitrary 
$G$, the preceding conclusion is typically not true, and hence, $W^{c_\bullet(G)}$ 
is not, in general, a discrete $G$--spectrum, so that applying 
the functor $(-)_d$ to $W^{c_\bullet(G)}$ typically does not leave $W^{c_\bullet(G)}$ 
unchanged.
\end{Rk}

\end{document}